\newcommand{\BibTeX}{{\scshape Bib}\kern-.08em\TeX}
\newcommand{\T}{\S\kern .15em\relax}
\theoremstyle{plain}
\newtheorem{definition}{Definition}
\newtheorem{lemma}{Lemma}
\newtheorem{proposition}{Proposition}
\newtheorem{theorem}{Theorem}
\theoremstyle{definition}
\newtheorem{remark}{Remark}
\newtheorem{examples}{Examples}
\def\dim{\mathrm{dim}}
\begin{document}

\title{On a theorem of Faltings on formal functions}
\author{Paola Bonacini}
\address{Dipartimento di Matematica e Informatica\\
Università degli Studi di Catania\\
Viale A. Doria 6\\
95125 Catania}
\email{bonacini@dmi.unict.it}
\author{Alessio Del Padrone}
\address{Dipartimento di Matematica\\
Via Dodecaneso 35,
16146 Genova}
\email{delpadro@dima.unige.it}
\author{Michele Nesci}
\address{Dipartimento di Matematica\\
Universita' degli Studi Roma Tre\\
Largo San Leonardo Murialdo 1\\
00146 Roma}
\email{nesci@mat.uniroma3.it}
\date{}

\maketitle

\begin{abstract}
In 1980, Faltings proved, by deep local algebra methods, a local result regarding
formal functions which has the following global geometric fact  as a consequence.
{\it Theorem. $-$   Let $k$ be an algebraically closed field  (of any characteristic).
Let $Y$ be a closed subvariety of a projective irreducible variety $X$ defined over $k$.
Assume that $X\subset \mathbb P^n$, $\dim (X)=d>2$ and $Y$ is the intersection of
$X$ with $r$ hyperplanes of $\mathbb P^n$, with $r\le d-1$.
Then, every formal rational function on $X$ along $Y$
can be (uniquely) extended to a rational function on $X$.}
Due to its importance,  the aim of this paper is to provide two elementary global geometric
proofs of this theorem.
\end{abstract}

\section*{introduction}

The aim of this work is to give two elementary global geometric proofs of the following Theorem
\ref{FaltingsTheorem}, which is a consequence of a local result of Faltings
\cite{Fa80} by means of the general local-global philosophy explained in \cite{GroSGA2}.
Faltings original proof is not so easy to follow, and it is also not immediate that what he proved implies
Theorem \ref{FaltingsTheorem}, which is, on the other hand, useful for the applications.
Hence we think that giving elementary arguments could be of interest.

\begin{theorem}\label{FaltingsTheorem}
Let $k$ be an algebraically closed field  (of any characteristic).
Let $Y$ be a closed subvariety of a projective irreducible subvariety $X$ defined over $k$.
Assume that $X\subset \mathbb P^n$, $\dim (X)=d>2$ and $Y$ is the intersection of
$X$ with $r$ hyperplanes of $\mathbb P^n$, with $r\le d-1$. Then $Y$ is $G3$ in $X$.
\end{theorem}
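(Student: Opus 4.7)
I would prove the theorem by induction on $r$, reducing the general case to that of a single hyperplane section ($r=1$), and invoking at each step the \emph{transitivity} of the $G3$ property along a flag $Y \subset Z \subset X$: if $Y$ is $G3$ in $Z$ and $Z$ is $G3$ in $X$, then $Y$ is $G3$ in $X$. This transitivity is a standard formal-algebraic fact which I would either assume or derive directly from the definition of $G3$ in terms of the natural map $K(X) \to K(\widehat{X}_{/Y})$.

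\textbf{Inductive reduction.} For $r \geq 2$, set $X' := X \cap H_r$. Then $\dim X' = d - 1$, and $Y = X' \cap H_1 \cap \ldots \cap H_{r-1}$ is cut on $X'$ by $r - 1 \leq d - 2 = \dim X' - 1$ hyperplane sections of $\mathbb{P}^n$. When $\dim X' \geq 3$ (equivalently, $d \geq 4$), the inductive hypothesis applied to the pair $(X', Y)$ gives that $Y$ is $G3$ in $X'$; combined with the base case applied to $X' \subset X$, which says that $X'$ is $G3$ in $X$, transitivity yields that $Y$ is $G3$ in $X$. The borderline case $d = 3, r = 2$ escapes this scheme, since $X'$ is then a surface and the base case is not available for $\dim X' = 2$; it requires a separate direct argument.

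\textbf{Base case.} For $r = 1$, let $Y = X \cap H$ with $d := \dim X \geq 3$. Given a formal rational function $f$ on $X$ along $Y$, I would construct the extending rational function on $X$ by introducing a generic pencil of hyperplanes through $H$, whose base locus $L \subset H$ has codimension $2$ in $\mathbb{P}^n$. Setting $Z := X \cap L \subset Y$, the pencil induces (after blowing up $Z$ in $X$) a morphism to $\mathbb{P}^1$ whose fibers are proper transforms of hyperplane sections of $X$, all containing $Z$. The formal function $f$ restricts to formal functions along $Z$ on each such fiber, and the goal is to show that, as the pencil parameter varies, these restrictions come from a single rational function on $X$ restricting to $f$ in the formal sense.

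\textbf{Main obstacle.} The principal difficulty lies in the base case: passing from the formal-along-$Y$ data of $f$ to a genuine global rational function on $X$, by controlling the gluing of its fiberwise restrictions across the pencil. A secondary technical issue is the borderline case $d = 3, r = 2$ of the induction, where the intermediate $X'$ is a surface; this would have to be handled by a direct argument, likely exploiting the ambient $\dim X \geq 3$ via a pencil of hyperplanes in $\mathbb{P}^n$ passing through the codimension-$2$ linear space $H_1 \cap H_2$, so that the underlying variety on which one works remains $X$ itself.
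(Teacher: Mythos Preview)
Your inductive scheme rests on a ``transitivity'' of $G3$ along a flag $Y\subset Z\subset X$ that you describe as a standard formal-algebraic fact. It is not. The hypothesis that $Y$ is $G3$ in $Z$ says $K(Z)\cong K(Z_{/Y})$, a statement about the formal neighbourhood of $Y$ \emph{inside $Z$}. What you need is $K(X)\cong K(X_{/Y})$, which concerns the formal neighbourhood of $Y$ \emph{inside $X$}; this sees all the normal directions of $Z$ in $X$, which $K(Z_{/Y})$ does not. Concretely, the factorisation $K(X)\to K(X_{/Z})\to K(X_{/Y})$ reduces the question, once $(X,Z)$ is $G3$, to showing that $K(X_{/Z})\to K(X_{/Y})$ is an isomorphism, and there is no mechanism by which $K(Z)\cong K(Z_{/Y})$ controls that map. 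So the core reduction of your induction is unjustified. A second, independent problem is that $X':=X\cap H_r$ need not be irreducible (the $H_i$ are prescribed, not generic), so neither the inductive hypothesis nor the very notion of $G3$ applies to $(X',Y)$ as stated; and you yourself note that the base case and the $d=3,\,r=2$ case remain open.

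The paper proceeds quite differently and avoids both obstacles. It first shows (via Grothendieck's connectedness) that $(X,Y)$ is universally connected, so that $K(X_{/Y})$ is a field; then it manufactures a single proper surjective morphism $f\colon X'\to X$ from an irreducible $X'$ together with a surjection $p\colon X'\to \mathbb{P}^d$ such that some linear subspace $Y'\subset\mathbb{P}^d$ of dimension $d-r\ge 1$ satisfies $p^{-1}(Y')\subset f^{-1}(Y)$. The Hironaka--Matsumura theorem gives $Y'$ $G3$ in $\mathbb{P}^d$, the Hironaka--Matsumura formula transfers this to $p^{-1}(Y')$ being $G3$ in $X'$, and the nesting $p^{-1}(Y')\subset f^{-1}(Y)$ (together with $K(X'_{/f^{-1}(Y)})$ being a field) yields $(X',f^{-1}(Y))$ and hence $(X,Y)$ $G3$. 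Two explicit constructions of such $(X',f,p)$ are given (a graph closure of a linear projection, and an incidence variety), and no induction on $r$ is needed.
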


If $Y$ is a complete intersection of $X$ with the hyperplanes $H_i$ (i.e. $\dim(Y)=d-r$),
this result was already proved geometrically by Hironaka and Matsumura \cite[(4.3), (3.5)]{HiMa68}
(see also \cite[9.25]{Bad}).
Faltings' result, as formulated in Theorem \ref{FaltingsTheorem}, is indeed useful in some applications.
For instance, B\u adescu used it in an essential way to prove a relevant strengthening of Fulton-Hansen connectedness Theorem (see \cite[(0.1)]{BaNag96}, cf. also \cite[Chapter 11]{Bad}, or also Example \ref{FirstExamples} below).\\

In this work we present two global geometric proofs of \ref{FaltingsTheorem}.
Both proofs use, repeatedly, as a key tool a Theorem of Hironaka and Matsumura
 (\cite[(2.7)]{HiMa68}, see also Theorem  \ref{HironakaMatsumuraFormula} below)
whose proof makes essential use of Grothendieck's existence Theorem \cite[(5.1.4)]{EGA}).
Indeed this result, under the hypotheses of Theorem \ref{FaltingsTheorem} and by means of geometric constructions yielding suitable morphisms, reduces the problem to the case of closed connected subvarieties of projective spaces of positive dimension.
This case is well known by another fundamental Theorem of Hironaka and Matsumura (\cite[(3.3)]{HiMa68}, see also Theorem  \ref{HironakaMatsumuraTheorem} below).
The first one is inspired by the proof of  Hironaka and Matsumura of the result, for the complete intersection case, mentioned above;
in fact what we do is to provide the necessary extra arguments (not completely trivial)
 in order to make Hironaka-Matsumura's proof work.
As far as the second proof is concerned, we use a standard construction (involving a certain incidence variety).\\

The first section is devoted to recall some basic facts of formal geometry and then to explain the common stategy of the two proofs, which are based on two ``projective geometry style'' constructions.
The first proof is given in section \ref{FirstProof}, and the second is presented in section
\ref{SecondProof}.




\medskip

{\bf Acknowledgments.}
We want to thank Professor Lucian B\u{a}descu
who proposed this problem to us and supervised our work, giving also several useful
comments and suggestions. This project started during our stay in the
PRAGMATIC 2006 held at the Department of Mathematics of the University
of Catania. We would also like to thank this institution for their warm
hospitality and for the excellent working atmosphere.

\section{Background material}\label{BackgroundSection}

The main reference is the original work of Hironaka and Matsumura \cite{HiMa68},
further material, together with a gentle introduction to ``formal geometry'', can be found
in the textbook \cite{Bad}.

Let $\mathcal{Z}$ be a formal scheme (see \cite{EGAI}, cf. also \cite{HaASAV70, Ha, Bad}).
The ring of formal rational functions on $\mathcal{Z}$, $K(\mathcal{Z})$, is defined as
follows: for any affine open subset $U$ of $\mathcal{Z}$, let $[\mathcal{O}_\mathcal{Z}(U)]_0$ be
the total ring of fractions of $\mathcal{O}_\mathcal{Z}(U)$, and let $\mathcal{M}_\mathcal{Z}$ be the sheaf associated
to the presheaf on $\mathcal{Z}$ defined by $U\mapsto [\mathcal{O}_\mathcal{Z}(U)]_0$.
Then $K(\mathcal{Z})=H^0(\mathcal{Z},\mathcal{M}_\mathcal{Z})$.
If $\mathcal{Z}$ is an ordinary scheme, $K(\mathcal{Z})$ is nothing but the usual ring of rational functions of
$\mathcal{Z}$.

As a special case, for any pair $(X,Y)$ with $X$ a locally noetherian scheme and $Y$ a connected closed subscheme of $X$,  we can consider the formal completion $\mathcal{Z}:=X_{/Y}$ of $X$ along $Y$.
Then the completion morphism $X_{/Y}\longrightarrow X$ gives rise to the canonical homomorphisms $H^0(X,\mathcal{O}_X)\longrightarrow H^0(X_{/Y},\mathcal{O}_{X_{/Y}})$ and
$K(X)\longrightarrow K(X_{/Y})$.

We recall that for any open neighbourhood $U$ of $Y$ in $X$ we have an isomorphism of formal schemes
$U_{/Y}\cong X_{/Y}$. Moreover: $ X_{/Y}\cong  X_{/(Y)_{\mathrm{red}}}$, that is $X_{/Y}$ depends just on the {\it closed subset} $Y$.

By \cite[Remark p. 57]{HiMa68}, when $X$ is a
reduced algebraic scheme (for example an algebraic variety),
then $K(X_{/Y})$ is a finite direct product of fields. If $Y$ is the disjoint union of two closed
subsets $Y_1$ and $Y_2$, then $K(X_{/Y})=K(X_{/Y_1})\times K(X_{/Y_2})$. Hence, in order
$K(X_{/Y})$ to be a field we must have $Y$ connected. Conversely,
if $X$ is an irreducible normal projective variety, and $Y$ is connected,
then $K(X_{/Y})$ is a field.

\begin{examples}\label{ExamplePn}
As basic examples we consider the case of (connected) subvarieties $Y$ of $X={\mathbb P}^n$,
the complex projective space.
\begin{itemize}
\item[a)] For $Y={P}\in X$ a point, say over $k={\mathbb C}$, we have
$X_{/Y}={\mathbb P}^n_{/P}\cong {\mathbb A}^n_{/(0,\dots, 0)}$,
and hence $K({\mathbb P}^n_{/P})\cong {\mathbb C}((x_1,\dots,x_n))$
(the field of fraction of the ring of formal power series ${\mathbb C}[[x_1,\dots,x_n]]$).\medskip
\item[b)] In case $Y$ is connected and positive dimensional, a fundamental result of Hironaka and Matsumura, quoted below (see Theorem  \ref{HironakaMatsumuraTheorem}), asserts that
$K({\mathbb P}^n_{/Y})\cong K({\mathbb P}^n)={\mathbb C}(x_1,\dots,x_n)$.
\end{itemize}
\end{examples}

A basic property of the ring of formal rational functions is given by the following useful
formula due to Hironaka and Matsumura (see \cite[(2.7)]{HiMa68}, cf. also \cite[9.11]{Bad}):

\begin{theorem}\label{HironakaMatsumuraFormula}
Let $f\colon X^\prime\longrightarrow X$ be a proper surjective morphism of
irreducible algebraic varieties, and let $Y\subseteq X$ be a closed subvariety of $X$,
then the canonical homomorphism
$$
[K(X^\prime)\otimes_{K(X)}K(X_{/Y})]_0\longrightarrow K(X^\prime_{/f^{-1}(Y)})
$$
is an isomorphism.
\end{theorem}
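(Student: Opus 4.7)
The plan is to use Stein factorization to reduce the statement to two more tractable cases, one essentially algebraic and the other requiring Grothendieck's existence theorem. Write $f=g\circ h$ where $h\colon X'\to W$ is proper with $h_*\mathcal{O}_{X'}=\mathcal{O}_W$ (so $h$ has geometrically connected fibres and $W$ is an integral variety) and $g\colon W\to X$ is finite. If the formula is established separately for $g$ (with $Y\subset X$) and for $h$ (with $T:=g^{-1}(Y)\subset W$), then substituting $K(W_{/T})\cong[K(W)\otimes_{K(X)}K(X_{/Y})]_0$ into $K(X'_{/h^{-1}(T)})\cong[K(X')\otimes_{K(W)}K(W_{/T})]_0$ and performing a routine manipulation of total quotient rings (inverting additional nonzerodivisors does not change the total quotient) yields the formula for $f$.

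For the finite case $g\colon W\to X$, the argument is affine-local and essentially algebraic. With $X=\mathrm{Spec}(A)$, $W=\mathrm{Spec}(B)$, and $Y=V(I)$, the finiteness of $B$ over $A$ gives $\hat B=B\otimes_A\hat A$ for the $I$-adic completions, and the identification $K(W_{/g^{-1}(Y)})\cong[K(W)\otimes_{K(X)} K(X_{/Y})]_0$ then follows by inverting the nonzerodivisors of $\hat B$ and globalising.

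The heart of the matter, and the step I expect to be the main obstacle, is the Stein case $h\colon X'\to W$, where one must show that the canonical map $[K(X')\otimes_{K(W)} K(W_{/T})]_0\to K(X'_{/h^{-1}(T)})$ is an isomorphism. The idea is to represent a given formal rational function on $X'$ along $h^{-1}(T)$ locally as a quotient of sections of a coherent formal sheaf $\mathcal{F}$ on $X'_{/h^{-1}(T)}$, apply Grothendieck's existence theorem \cite[(5.1.4)]{EGA} to algebraise $\mathcal{F}$ to a coherent sheaf on $X'$, and then use formal proper base change together with the identity $h_*\mathcal{O}_{X'}=\mathcal{O}_W$ to push the numerator and denominator down to $W_{/T}$, tensoring with $K(X')$ over $K(W)$ to compensate for the possibly higher-dimensional generic fibre of $h$. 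The delicate point is that the sheaf of meromorphic sections is not coherent, so one cannot apply formal GAGA to the rational function directly; instead one must handle numerators and denominators as coherent data separately and carefully track which sections remain nonzerodivisors after the pushforward $h_*$.
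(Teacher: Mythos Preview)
The paper does not give its own proof of this theorem. Theorem~\ref{HironakaMatsumuraFormula} appears in Section~\ref{BackgroundSection} purely as a quoted background result from Hironaka--Matsumura \cite[(2.7)]{HiMa68} (with a secondary reference to \cite[9.11]{Bad}); the only comment the paper makes about its proof is the parenthetical remark in the introduction that it ``makes essential use of Grothendieck's existence Theorem \cite[(5.1.4)]{EGA}''. So there is no argument in the paper to compare your plan against.

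That said, your outline is compatible with that single remark: you correctly isolate formal GAGA as the crux, and the Stein-factorization reduction together with the algebraic treatment of the finite part via $\hat B\cong B\otimes_A\hat A$ is a standard and reasonable way to organise the argument. As you yourself flag, the Stein step is only a sketch; turning it into a proof requires care about how one applies Grothendieck's existence theorem over the (generally non-affine) formal base $W_{/T}$ and about tracking nonzerodivisors through $h_*$, but as a plan there is no evident wrong turn.
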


\begin{definition}\label{DefGi} {\em
Let $X$ be a scheme, and let $Y$ be a closed subscheme of $X$.
Following \cite[(2.9)]{HiMa68} (see also \cite[V]{HaASAV70} \cite[9.12]{Bad}), we say that:
$Y$ is $G2$ in $X$ if $K(X_{/Y})$  is a finite module over $K(X)$;
$Y$ is $G3$ in $X$ if the canonical map $K(X)\longrightarrow K(X_{/Y})$ is an isomorphism.}
\end{definition}

\begin{remark}\label{FirstRemarks}
We recall some elementary facts from \cite[(2.10)]{HiMa68}
(see also \cite[V]{HaASAV70} and \cite[Chapter 9]{Bad}).
\begin{itemize}
\item[a)] If $X$ is connected and complete over an algebraically closed field,
then $K(X_{/Y})$ is a finite direct product of fields. Hence, if $Y$ is $G3$ in $X$, then $Y$ is necessarily connected. Moreover, $G3$ $\Rightarrow$ G2.
\medskip

\item[b)] Theorem \ref{HironakaMatsumuraFormula} easily implies that, if $Y$ a closed subvariety of an irreducible
variety $X$, then  $(X^\prime,f^{-1}(Y))$ is $G3$
if and only if $(X,Y)$ is $G3$  for every proper surjective morphism
 $f\colon X^\prime\longrightarrow X$  from an irreducible variety $X^\prime$.
 See \cite[(2.7)]{HiMa68}, cf. also  \cite[V]{HaASAV70} and \cite[9.9, 9.13(i)]{Bad}.\medskip

\item[c)] The following elementary fact will be useful.
Let $X$ be an irreducible variety with two ``nested'' closed subsets $Y_2\subset Y_1\subset X$. Assuming $K(X_{/Y_1})$ a field, then $(X,Y_1)$ is $G3$ if $(X,Y_2)$ is $G3$. Indeed:
\[
\xymatrix@+1pc{
K(X) \ar[rr]^\cong \ar@{^{(}->}[dr]& & K(X_{/Y_2})\\
& K(X_{/Y_1}) \ar@{^{(}->}[ur]
}
\]
\end{itemize}
\end{remark}

The following fundamental Theorem, also due to Hironaka and Matsumura (see \cite[(3.3)]{HiMa68}), completely explains what happens in case $Y$ is a closed subscheme of a projective space
$X={\mathbb P}^n$ (as already remarked, in order $Y$ to be $G3$ in $X$, we must have $Y$ connected).

\begin{theorem}\label{HironakaMatsumuraTheorem}
Let $Y$ be a connected closed subscheme of $X={\mathbb P}^n$.
Then, $Y$ is $G3$ in $X$ if (and only if) $\dim (Y)>0$.
\end{theorem}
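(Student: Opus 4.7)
If $\dim Y=0$ and $Y$ is connected, then $Y$ is a single (possibly non-reduced) point $P$. By Example \ref{ExamplePn}(a), $K(\mathbb{P}^n_{/P})$ is the fraction field of a formal power series ring, strictly larger than $K(\mathbb{P}^n)$, so $Y$ is not $G3$ in $\mathbb{P}^n$.

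\textbf{Sufficiency.} Assume $\dim Y>0$. I would proceed in three steps. First, I would reduce to the case of a connected curve $C\subseteq Y$. Since $\mathbb{P}^n$ is irreducible and normal and $Y$ is connected, $K(\mathbb{P}^n_{/Y})$ is a field, so by Remark \ref{FirstRemarks}(c) it suffices to exhibit a connected positive-dimensional closed subset $C\subseteq Y$ which is $G3$ in $\mathbb{P}^n$. Such a $C$ can be built by chaining irreducible curves through the finitely many irreducible components of $Y$ so as to preserve connectedness.

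Second, given $\phi\in K(\mathbb{P}^n_{/C})$, I would write $\phi=s/t$ with $s,t\in H^0(\mathbb{P}^n_{/C},\mathcal{O}(d))$ for some $d\gg 0$. On the finite affine cover of $\mathbb{P}^n_{/C}$ inherited from the standard cover of $\mathbb{P}^n$, $\phi$ is locally a ratio $s_\alpha/t_\alpha$ of formal regular functions with $t_\alpha$ a non-zero-divisor; a standard denominator-clearing procedure, after multiplying by sufficiently high powers of homogeneous coordinates, patches these into a single global representation by formal sections of $\mathcal{O}(d)$.

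Third, I would prove that for $d\gg 0$ the natural map
\[
H^0(\mathbb{P}^n,\mathcal{O}(d))\longrightarrow H^0(\mathbb{P}^n_{/C},\mathcal{O}(d))
\]
is an isomorphism, thereby lifting $s,t$ to global polynomial sections $S,T$ and concluding $\phi=S/T\in K(\mathbb{P}^n)$. Injectivity is immediate: a nonzero polynomial cannot vanish to infinite order on the positive-dimensional $C$, so $\bigcap_k H^0(\mathbb{P}^n,\mathcal{I}_C^k(d))=0$. Surjectivity, which is the principal obstacle, is analyzed via the inverse system obtained from the short exact sequences $0\to \mathcal{I}_C^k(d)\to \mathcal{O}(d)\to \mathcal{O}(d)/\mathcal{I}_C^k(d)\to 0$; the required Mittag-Leffler condition on $\{H^0(\mathbb{P}^n,\mathcal{I}_C^k(d))\}$, which guarantees $\varprojlim{}^1$ vanishes, follows from the Artin-Rees lemma together with the finite generation of the graded module $\bigoplus_k H^0(\mathbb{P}^n,\mathcal{I}_C^k(d))$ over the homogeneous coordinate ring of $\mathbb{P}^n$. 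Equivalently, one may package the whole inverse-limit analysis as an instance of Grothendieck's formal function theorem; that is the step I expect to demand the most care.
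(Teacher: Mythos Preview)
The paper does not contain a proof of this theorem at all: it is stated with a reference to \cite[(3.3)]{HiMa68} and then used as a black box in both proofs of Theorem~\ref{FaltingsTheorem}. So there is no ``paper's own proof'' against which to compare your proposal.

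That said, your sketch has a genuine gap in Step~3. From the exact sequence
\[
0 \longrightarrow H^0(\mathcal{I}_C^k(d)) \longrightarrow H^0(\mathcal{O}(d)) \longrightarrow H^0(\mathcal{O}(d)/\mathcal{I}_C^k(d)) \longrightarrow H^1(\mathcal{I}_C^k(d)) \longrightarrow 0
\]
the Mittag--Leffler condition on $\{H^0(\mathcal{I}_C^k(d))\}_k$ (which Artin--Rees does give) only yields surjectivity of $H^0(\mathcal{O}(d)) \to \varprojlim_k \bigl(H^0(\mathcal{O}(d))/H^0(\mathcal{I}_C^k(d))\bigr)$. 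It says nothing about the discrepancy between that quotient and $H^0(\mathcal{O}(d)/\mathcal{I}_C^k(d))$, which is exactly $H^1(\mathcal{I}_C^k(d))$. To conclude surjectivity onto $H^0(\mathbb{P}^n_{/C},\mathcal{O}(d)) = \varprojlim_k H^0(\mathcal{O}(d)/\mathcal{I}_C^k(d))$ you must also show that $\varprojlim_k H^1(\mathcal{I}_C^k(d)) = 0$, and for an arbitrary connected curve $C \subset \mathbb{P}^n$ these $H^1$ groups are typically nonzero and grow with $k$. Controlling this inverse limit is the real content of the theorem, and neither Artin--Rees nor the formal function theorem for $\mathbb{P}^n \to \mathrm{Spec}\,k$ supplies it directly. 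Step~2 is also underspecified: producing a \emph{global} representation $\phi = s/t$ with $s,t \in H^0(\mathbb{P}^n_{/C},\mathcal{O}(d))$ from local fraction data on a formal scheme is not the routine denominator-clearing one does on an ordinary projective variety, precisely because you do not yet know that formal sections of $\mathcal{O}(d)$ are algebraic. Hironaka and Matsumura's original argument avoids this circularity by a different route (via finite self-maps of $\mathbb{P}^n$ and an induction), and you would need a comparable idea here.
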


\begin{remark}  Let $X$ be a projective irreducible variety defined over the field of complex numbers $k=\mathbb C$,  and let $Y$ be a connected positive dimensional closed subvariety of $X$.
Using results of Chow and Serre's GAGA one can show that for every connected open subset $U$ of
$X$  (in the complex topology of $X$) containing $Y$, one has the following inclusions  (see e.g. \cite{Bad}, Chapter 10)
$$K(X)\subseteq\mathscr M(U)\subseteq K(X_{/Y}).$$
In particular, if $Y$ is $G3$ in $X$ then one gets that $K(X)=\mathscr M(U)$. Therefore Theorem \ref{HironakaMatsumuraTheorem} implies the following analytic result  of Severi-Barth: For every closed connected subvariety $Y$ of $\mathbb P^n_{\mathbb C}$ of dimension $\geq 1$, every meromorphic function $\xi$ defined in a complex connected neighborhood $U$ of $Y$ in $\mathbb P^n_{\mathbb C}$ can be (uniquely) extended to a meromorphic (and hence, rational) function on $\mathbb P^n_{\mathbb C}$. Severi proved this result in \cite{Se32}
in the case when $Y$ is a nonsingular hypersurface of $\mathbb P^n_{\mathbb C}$, and Barth generalised it to every closed connected subset $Y$ of $\mathbb P^n_{\mathbb C}$ of positive dimension in \cite{Ba68}.
\end{remark}

As we shall see in Proposition \ref{FaltingsUnConnPair}, the pairs $(X,Y)$ as in the statement of Theorem \ref{FaltingsTheorem} share a fundamental property.
They are {\it universally connected}, that is:

\begin{definition} {\em
Let $X$ be a variety over an algebraically closed field $k$, and let $Y$ be a closed subvariety of $X$.
We say that the pair $(X,Y)$ is {\em universally connected} if $f^{-1}(Y)$ is connected in $X^\prime$
for every proper surjective morphism $f\colon X^\prime\longrightarrow X$ from an irreducible variety
$X^\prime$. }
\end{definition}

According to a result of B\u adescu and Schneider (see \cite[(2.7)]{BaSch02}, cf. also \cite[9.22]{Bad}), universally connected pairs can be characterized by means of formal rational functions as follows:

\begin{theorem}
Let $X$ be an irreducible variety, and let $Y$ be a closed subvariety of $X$. the following conditions are equivalent:
\begin{itemize}
\item[(i)] $(X,Y)$ is universally connected,
\item[(ii)] $K(X_{/Y})$ is a field and $K(X)$ is algebraically closed in $K(X_{/Y})$,
\item[(iii)] $K(X_{/Y})$ is a field and the algebraic closure of $K(X)$ in $K(X_{/Y})$
is purely inseparable over $K(X)$.
\end{itemize}
\end{theorem}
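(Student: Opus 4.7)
The plan is to prove the chain (ii) $\Rightarrow$ (iii) $\Rightarrow$ (i) $\Rightarrow$ (ii). The implication (ii) $\Rightarrow$ (iii) is immediate, since if $K(X)$ is algebraically closed in $K(X_{/Y})$ its algebraic closure is $K(X)$ itself, trivially purely inseparable. Both nontrivial directions rest on Theorem \ref{HironakaMatsumuraFormula}: for any proper surjective morphism $f\colon X'\to X$ with $X'$ irreducible,
\[
K(X'_{/f^{-1}(Y)})\;\cong\;[K(X')\otimes_{K(X)}K(X_{/Y})]_0,
\]
translating connectedness of $f^{-1}(Y)$ into the absence of nontrivial idempotents in a tensor product of field extensions. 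Throughout write $K:=K(X)$ and $M:=K(X_{/Y})$.

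For (iii) $\Rightarrow$ (i), fix $f\colon X'\to X$ proper surjective with $X'$ irreducible. Stein factorization reduces to the case where $f$ is finite (a factor with connected fibres preserves connectedness of preimages), so $K(X')/K$ is a finite field extension, which I decompose as $K\subseteq L_s\subseteq K(X')$ with $L_s/K$ separable and $K(X')/L_s$ purely inseparable. The key step is to show $L_s\otimes_K M$ is a field. Writing $L_s=K(\alpha)$ with separable minimal polynomial $p_s\in K[x]$, one has $L_s\otimes_K M=M[x]/(p_s)$, and any proper factorization of $p_s$ over $M$ would produce coefficients lying in the algebraic closure of $K$ in $M$, which by (iii) is contained in the purely inseparable closure $K^{p^{-\infty}}$. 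Purely inseparable extensions are Galois-invisible, so $\mathrm{Gal}(\bar K/K^{p^{-\infty}})=\mathrm{Gal}(\bar K/K)$ still acts transitively on the roots of $p_s$, forcing $p_s$ to remain irreducible over $K^{p^{-\infty}}$ -- a contradiction. Tensoring further by the purely inseparable $K(X')/L_s$ produces a local ring, whose total ring of fractions has a single residue field, so $f^{-1}(Y)$ is connected.

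For (i) $\Rightarrow$ (ii) I must show $M$ is a field and that $K$ is algebraically closed in it. Applying Theorem \ref{HironakaMatsumuraFormula} to the normalization $\nu\colon\tilde X\to X$, (i) forces $\tilde Y=\nu^{-1}(Y)$ connected; the field property of $K(\tilde X_{/\tilde Y})$ for normal irreducible $\tilde X$ with $\tilde Y$ connected, combined with $K(\tilde X)=K$, yields $M\cong K(\tilde X_{/\tilde Y})$, so $M$ is a field. Next, suppose $\alpha\in M$ is algebraic over $K$ and let $L_s$ be the separable closure of $K$ in $K(\alpha)$. If $L_s\neq K$, let $f\colon X'\to X$ be the normalization of $X$ in $L_s$; then $K(X')=L_s\subset M$, so in $L_s\otimes_K M=M[x]/(p_s)$ the polynomial $p_s$ acquires the root $\alpha\in M$ and, by separability, factors as $(x-\alpha)\cdot q(x)$ with $\gcd(x-\alpha,q)=1$, producing via the Chinese Remainder Theorem a nontrivial idempotent, which disconnects $f^{-1}(Y)$ and contradicts (i). Hence the algebraic closure of $K$ in $M$ is purely inseparable; combined with the general fact (coming from excellence of algebraic varieties) that this algebraic closure is automatically separable over $K$, it must equal $K$, giving (ii).

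The main obstacle is the Galois-theoretic core of (iii) $\Rightarrow$ (i) -- showing that a separable irreducible polynomial stays irreducible under extension of scalars to the purely inseparable closure, which is what ensures $L_s\otimes_K M$ is a field. A subsidiary subtlety is the derivation of ``$M$ is a field'' in (i) $\Rightarrow$ (ii), routed through the normalization and the known field property of $K(\tilde X_{/\tilde Y})$ for normal $\tilde X$ with connected $\tilde Y$.
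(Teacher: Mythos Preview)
The paper does \emph{not} prove this theorem; it is quoted (without proof) as a result of B\u adescu and Schneider \cite[(2.7)]{BaSch02}, cf.\ also \cite[9.22]{Bad}. Hence there is no ``paper's own proof'' to compare against.

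Your argument follows the standard route and is essentially the one in the cited references. A couple of remarks:

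\begin{itemize}
\item In the step (i) $\Rightarrow$ (ii), when you write ``$p_s$ acquires the root $\alpha\in M$'', you mean a primitive element $\beta$ of $L_s$ rather than $\alpha$ itself; since $L_s\subset K(\alpha)\subset M$ this $\beta$ indeed lies in $M$, so the conclusion stands, but the wording should be adjusted.
\item The only genuinely delicate point is the last sentence: after showing that the algebraic closure of $K$ in $M$ is purely inseparable, you invoke ``a general fact (coming from excellence)'' that it is automatically separable. This is correct, but it deserves an actual argument: passing to the normalization $\tilde X$ one has $M\cong K(\tilde X_{/\tilde Y})$; if $\alpha\in M$ satisfied $\alpha^p=a\in K$ with $\alpha\notin K$, then for $X'$ the normalization of $\tilde X$ in $K(\alpha)$ the Hironaka--Matsumura formula gives $K(X'_{/f^{-1}(\tilde Y)})\cong M[\epsilon]/(\epsilon^p)$, which is not reduced. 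But $X'$ is normal and $f^{-1}(\tilde Y)$ is connected (purely inseparable covers are universal homeomorphisms), so $K(X'_{/f^{-1}(\tilde Y)})$ must be a field---this is where excellence of $X'$ is used, to guarantee that completion preserves normality and hence that the ring of formal rational functions on a connected closed subset of a normal variety is a field. You assert this ``field property'' earlier in the same paragraph, so you should either justify it there (it is stated in the paper only for \emph{projective} normal $X$) or point explicitly to \cite[9.9, 9.20]{Bad}.
\end{itemize}

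With these two clarifications your sketch is a valid proof along the lines of B\u adescu--Schneider.
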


\begin{remark}\label{G3UnConn}
By Example \ref{FirstRemarks}, b)
we see that if $(X,Y)$ is $G3$ (with $X$ irreducible) then $(X^{\prime},f^{-1}(Y))$ is still $G3$, hence by Remark
\ref{FirstRemarks}, a), $(X,Y)$ is universally connected.
Notice also that, If $(X,Y)$ is a universally connected pair,
then $Y$ is $G3$ in $X$ if and only if $Y$ is G2 in$X$.
\end{remark}

\begin{examples}\label{FirstExamples}
Let us recall some basic known examples and results.
\begin{enumerate}
\item[a)] A point $Y=\{P\}$ in $\mathbb{P}^n$ is never G2 (and in particular, nor $G3$) by Example \ref{ExamplePn}, a).\medskip

\item[b)] Let $f\colon X^\prime\longrightarrow X$ be a proper surjective morphism of
irreducible algebraic varieties, and let $Y\subseteq X$ and $Y^\prime\subseteq X^\prime$ be closed subvarieties such that $f(Y^\prime)\subseteq Y$.  Assume that $K(X_{/Y}), K(X^\prime_{/Y^\prime})$,
and $K(X^\prime_{/f^{-1}(Y)})$ are fields.  If $Y^\prime$ is $G3$ in $X^\prime$,
then $Y$ is $G3$ in $X$ (\cite[9.23]{Bad}).\medskip

\item[c)] If $Y^\prime$ is G2 in $X^\prime$, and if $f\colon X^\prime\longrightarrow X$
is a non constant dominant morphism of irreducible algebraic varieties, then
$\dim f(Y^\prime)>0$.\medskip

\item[d)] B\u adescu (see \cite[(0.1)]{BaNag96}, cf. also \cite[11.1]{Bad}) proved the following strengthening (and generalization) of Fulton-Hansen
connectedness Theorem (\cite{FuHa79}):  for any proper morphism
 $f\colon X^\prime\longrightarrow \mathbb{P}^n(e)\times \mathbb{P}^n(e)$
from an irreducible variety $X^\prime$,  such that $\dim f(X)>n$, then $f^{-1}(\Delta_{\mathbb{P}^n(e)})$
is $G3$ in $X^\prime$ . Here $\mathbb{P}^n(e)$ denotes the $n$-dimensional weighted projective space of weights $e=(e_0,\ldots,e_n)$, with $e_i\geq 0$, $i=0,\ldots,n$. Theorem \ref{FaltingsTheorem} above plays an important role in the proof of this result.
\end{enumerate}
\end{examples}

Both our proofs follow the pattern suggested by the two facts below. The first shows, as promised, that the pairs
$(X,Y)$ as in Theorem \ref{FaltingsTheorem} are universally connected.

\begin{proposition}\label{FaltingsUnConnPair}
Let $Y$ be a closed subvariety of a projective irreducible subvariety $X$. Assume
that $X\subset \mathbb P^n$, $\dim X=d>2$ and $Y$ is a set-theoretic intersection of
$X$ with $r$ hyperplanes of $\mathbb P^n$, with $r\le d-1$.
Then $(X,Y)$ is universally connected.
\end{proposition}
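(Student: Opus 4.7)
I plan to verify universal connectedness directly, by showing that for every proper surjective morphism $f\colon X'\to X$ from an irreducible variety $X'$, the preimage $f^{-1}(Y)$ is connected. The key reduction is to pass to the composite $g\colon X'\xrightarrow{f} X\hookrightarrow \mathbb P^n$, which is proper with image $X$ of dimension $d$. Setting $L:=H_1\cap\cdots\cap H_r$, a linear subspace of $\mathbb P^n$ of codimension at most $r\le d-1$, one has $f^{-1}(Y)=g^{-1}(L)$. The statement is thus reduced to the following general fact: if $g\colon X'\to\mathbb P^n$ is a proper morphism from a complete irreducible variety and $L\subset\mathbb P^n$ is a linear subspace of codimension $e<\dim g(X')$, then $g^{-1}(L)$ is connected.

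I would deduce this general fact from the Fulton-Hansen connectedness theorem \cite{FuHa79} by a standard trick: consider the morphism $h=(g,\mathrm{inc}_L)\colon X'\times L\to\mathbb P^n\times\mathbb P^n$, where $\mathrm{inc}_L$ is the inclusion of $L$ into $\mathbb P^n$. Its image has dimension $\dim g(X')+\dim L=d+(n-e)>n$, so Fulton-Hansen ensures that $h^{-1}(\Delta_{\mathbb P^n})$ is connected; a direct inspection identifies this preimage, via the first projection $X'\times L\to X'$, with $g^{-1}(L)$.

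A self-contained variant that avoids citing Fulton-Hansen would instead use the Stein factorization $f=f''\circ f'$, with $f'\colon X'\to X''$ having connected fibers and $f''\colon X''\to X$ finite surjective from an irreducible projective variety $X''$ of dimension $d$. Since $(f'')^{\ast}\mathcal O_X(1)$ is ample on $X''$, the preimage $f''^{-1}(Y)$ is the set-theoretic intersection of $r\le d-1=\dim X''-1$ members of an ample linear system on a $d$-dimensional irreducible projective variety, and hence is connected by Grothendieck's classical connectedness theorem (\cite{GroSGA2}, Exp.~XIII). Then $f^{-1}(Y)=f'^{-1}(f''^{-1}(Y))$ is connected, since $f'$ is a proper morphism with connected fibers over the connected base $f''^{-1}(Y)$.

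The main technical obstacle I anticipate is selecting and correctly applying the form of the connectedness theorem. The Fulton-Hansen statement is phrased for the diagonal in $\mathbb P^n\times\mathbb P^n$, so one must carry out the small manipulation above to obtain the hyperplane-section version; the Stein factorization route, in exchange, requires a careful verification that $X''$ is indeed an irreducible projective variety and that the classical ample-linear-system connectedness theorem applies to a set-theoretic (rather than scheme-theoretic) intersection of divisors.
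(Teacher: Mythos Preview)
Your second variant---Stein factorization down to a finite $f''\colon X''\to X$, then Grothendieck's connectedness theorem (SGA2, Exp.~XIII) applied to the common zero locus of $r<d=\dim X''$ sections of the ample and globally generated line bundle $(f'')^{*}\mathcal O_X(1)$---is exactly the paper's proof. The worry you flag about set-theoretic versus scheme-theoretic intersection is immaterial here, since connectedness is a purely topological statement.

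Your first route via Fulton--Hansen is also correct and is not circular: it is only B\u adescu's $G3$ strengthening (Example~\ref{FirstExamples}, part d)) that relies on Theorem~\ref{FaltingsTheorem}, whereas the original connectedness statement of \cite{FuHa79} does not. This route trades the Stein-factorization and ampleness bookkeeping for an appeal to a theorem of comparable depth; the identification of $h^{-1}(\Delta_{\mathbb P^n})$ with $g^{-1}(L)$ via the first projection is indeed a bijection, since for $(x',\ell)$ in the preimage the second coordinate is forced to equal $g(x')$. The paper simply opts for the Grothendieck citation, which sits in the same SGA2 circle of ideas already being used.
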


\begin{proof} Let $f\colon X'\to X$ be a proper surjective morphism. We have to prove that $f^{-1}(Y)$
is connected. By the Stein factorization we may assume that $f$ is finite. In this case $f^*(\mathscr O_X(1))$ is ample (and generated by its global sections) in which case the proposition follows from a result of Grothendieck (see \cite[\'Expos\'e XIII, Corollaire  2.2]{GroSGA2} (cf. also  \cite[Ch. 7, Corollary
7.7]{Bad}).
\end{proof}

A way to prove that a universally connected pair $(X,Y)$ is $G3$ is suggested by the following simple observation (cf. \cite[Proposition 9.23]{Bad}):

\begin{lemma}\label{Lemma}
Let $X$ be an irreducible projective variety, and let $Y$ be a closed subvariety of $X$.
Assume $(X,Y)$ is universally connected. Then $(X,Y)$ is $G3$ if and only if there is a surjective
proper morphism $f\colon X^\prime\to X$ and a closed subvariety $Y^{\prime\prime}\subset X^\prime$ such that
$f(Y^{\prime\prime})\subset Y$ and $(X^\prime,Y^{\prime\prime})$ is $G3$.
\end{lemma}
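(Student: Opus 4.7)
The plan is as follows. The ``only if'' direction is immediate: one takes $(X', Y'') = (X, Y)$ with $f = \mathrm{id}_X$. For the ``if'' direction, the condition $f(Y'') \subset Y$ is the same as the nesting $Y'' \subset f^{-1}(Y) \subset X'$ of closed subsets. The strategy is to first propagate the $G3$ property up this nesting via Remark \ref{FirstRemarks}(c), and then transport it across the morphism $f$ via Remark \ref{FirstRemarks}(b).

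First I would verify that the pair $(X', f^{-1}(Y))$ is itself universally connected. Indeed, for any proper surjective morphism $g\colon X'' \to X'$ from an irreducible variety $X''$, the composition $f \circ g\colon X'' \to X$ is a proper surjection, so $g^{-1}(f^{-1}(Y)) = (f \circ g)^{-1}(Y)$ is connected by the universal connectedness of $(X,Y)$. The B\u{a}descu--Schneider characterization of universal connectedness (via formal rational functions) then guarantees that $K(X'_{/f^{-1}(Y)})$ is a field.

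Next, I would apply Remark \ref{FirstRemarks}(c) to the nesting $Y'' \subset f^{-1}(Y) \subset X'$: since $(X', Y'')$ is $G3$ by hypothesis and $K(X'_{/f^{-1}(Y)})$ is a field, the remark yields that $(X', f^{-1}(Y))$ is $G3$. Finally, Remark \ref{FirstRemarks}(b) tells us that $(X', f^{-1}(Y))$ being $G3$ is equivalent to $(X, Y)$ being $G3$, which completes the proof.

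The argument is essentially bookkeeping with the previously established properties; the only non-trivial observation is that universal connectedness is inherited by $(X', f^{-1}(Y))$ from $(X, Y)$, and this is precisely what supplies the field hypothesis needed to apply Remark \ref{FirstRemarks}(c). Beyond this, no real obstacle is expected.
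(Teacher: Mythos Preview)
Your proof is correct and follows essentially the same route as the paper's: both arguments first observe that $K(X'_{/f^{-1}(Y)})$ is a field (you make this explicit by checking that $(X',f^{-1}(Y))$ inherits universal connectedness and then invoking the B\u{a}descu--Schneider characterization, while the paper states it more tersely), then use Remark~\ref{FirstRemarks}(c) on the nesting $Y''\subset f^{-1}(Y)$ to get that $(X',f^{-1}(Y))$ is $G3$, and finally descend to $(X,Y)$ via the proper surjection $f$. The only cosmetic difference is that for the last step you cite Remark~\ref{FirstRemarks}(b) whereas the paper cites Example~\ref{FirstExamples}(b); both work.
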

\begin{proof}
The necessity is trivial. For the converse:
being $(X,Y)$ universally connected $K(X^\prime_{/f^{-1}(Y)})$ is a field, hence,
using part b) of Remarks \ref{FirstRemarks} we get that $(X^\prime,f^{-1}(Y))$ is $G3$.
Then, by Example \ref{FirstExamples}, b), $(X,Y)$ is $G3$.
\end{proof}

\section{First proof of Theorem \ref{FaltingsTheorem}}\label{FirstProof}

We show that an idea of Hironaka and Matsumura to prove the theorem in the case when
$\dim(Y)=\dim(X)-r$ (i.e. if $Y$ is a complete intersection in $X$, see \cite[(4.3)]{HiMa68}) can be suitably modified to yield a proof of Theorem \ref{FaltingsTheorem} in general.
Our surjective proper morphism, as in Lemma \ref{Lemma},
is going to be a projection to $X$ from the closure
of the graph of a suitable linear projection of $X$.

Let $H_1$, \dots, $H_r$ in $\mathbb P^n$ be the hyperplanes cutting $Y$ on $X$
in ${\mathbb P}^n$, i.e. $Y=X\cap H_1\cap\dots\cap H_r$ (set-theoretically);
we can always assume that the $H_i$ are all distinct.

Let us consider the $(n-r)$-plane $H=H_1\cap\dots\cap H_r\supseteq Y$ and let
$L\subset H$ be a $(n-d-1)$-plane choosen in such a way  that $L$ does not contain
any irreducible component of $Y$.
Choose a $d$-plane $M\cong\mathbb P^d$, disjoint from $L$, and
let us consider the linear projection:  \[\pi\colon \mathbb P^n\dashrightarrow M,\]
of center $L$.
Setting $U:=\mathbb P^n\setminus L$ we find a morphism $\pi_U\colon U\rightarrow M$.
Note that $X_U:=X\cap U\ne \emptyset$ is an open (and so dense) subset of $X$,
and let us consider the morphism:
\[g_U:=(\pi_U)|_{X_U}\colon X_U\rightarrow M,\]
together with its graph:
\[\Gamma_U=\left\{(x,y)\in X_U\times M\mid y=g_U(x)\right\}\subseteq X_U\times
M\subseteq X\times M.\]
Let $\Gamma$ be the closure of $\Gamma_U$ in $X\times M$. So we get the commutative diagrams:
\begin{multicols}{2}\[
\xymatrix@+1pc{\Gamma_U \ar[r]^{(p_1)_U}_\cong \ar@{->>}[d]_{(p_2)_U}  & X_U \ar[dl]^{g_U}\\
M &
}
\]

\[
X^\prime:=\!\!\xymatrix@+1pc{
\Gamma \ar@{->>}[r]^{f:=p_1} \ar@{->>}[d]_{p_2} & X \ar@{-->}[dl]^{g}\\
M &
}
\]
\end{multicols}
\noindent in which  $(p_1)_U$ is an isomorphism and $(p_2)_U$ is dominant by the choice of $L$.
So, since $p_1$ and $p_2$ are projective morphism, we see that $p_1$ and $p_2$ are surjective. Moreover, being $\Gamma_U\cong X_U$, we have that $\Gamma$ is irreducible and $p_1$ is a birational regular map, this is our morphism $f$ as in Lemma \ref{Lemma}, and $X^\prime=\Gamma$.

Now we need to find a closed subvariety $Y^{\prime\prime}\subset X'=\Gamma$ which is $G3$ in $X'$ such that $f(Y^{\prime\prime})\subset Y$.
To this end, let us consider $Y^{\prime}:=\pi_U(H\cap U)=H\cap M$ (because $H\supset L$).
It is a $d-r$-plane in $M\cong {\mathbb P}^d$, and since $d-r\ge 1$, we infer that $Y^{\prime}$ is $G3$ in $M$ by
Theorem \ref{HironakaMatsumuraTheorem}. Therefore $Y^{\prime\prime}:={p_2}^{-1}(Y^{\prime})$ is $G3$ in $\Gamma$ by Theorem \ref{HironakaMatsumuraFormula}.
Clearly, by construction: $f(Y^{\prime\prime})=p_1({p_2}^{-1}(H\cap M))\subseteq H\cap X= Y$.

Now, since $Y^{\prime\prime}$ is $G3$ in $X'$, by Lemma \ref{Lemma} and Proposition \ref{FaltingsUnConnPair}, $Y$ is $G3$ in $X$. This finishes the first proof of Theorem \ref{FaltingsTheorem}.

\section{Second proof of Theorem \ref{FaltingsTheorem}}\label{SecondProof}

This second proof makes use of a suitable incidence variety.
Specifically, under the notation of Theorem \ref{FaltingsTheorem}, let $h_1, \dots, h_r$ in $\mathbb P^n$ be linear forms defining the distinct hyperplanes
$H_1, \dots, H_r$ such that, set-theoretically $Y=X\cap H_1\cap\dots\cap H_r$.
Recall that by hypothesis we have $r+1\leq d=\dim(X)$.
Let us consider the projective space:
\[P:=\mathbb P(H^0(\mathscr O_{\mathbb P^n}^{\oplus(r+1)}(1)))\cong
\mathbb P^{(n+1)(r+1)-1}.\]
Taken a global section $\sigma\in H^0(\mathscr O_{\mathbb P^n}(1))$, let us denote
by $V(\sigma)$ the zero locus of $\sigma$ in $\mathbb P^n$, which is a hyperplane in
$\mathbb P^n$ in the case that $\sigma$ is nonzero, and coincides with
$\mathbb P^n$ otherwise.
Let $Z\subset X\times P$ be the closed incidence subvariety given by:
\[Z\big\{(x,[\sigma_0,\dots,\sigma_{r}])\in X\times P\mid
x\in X\cap V(\sigma_0)\cap\dots\cap V(\sigma_{r})\big\}.\]
Let us consider the two projections:
\[
\xymatrix@+1pc{
Z \ar@{->>}[r]^{f} \ar@{->>}[d]_{g} & X \\
P &
}
\]
and note that $f$ is surjective and all its fibres are linear subspaces of $P$ of the same dimension.
Since $X$ is irreducible, we deduce that $Z$ is also irreducible.
The proper morphism $g$ is also surjective because $r+1\leq d$, and hence
$X\cap V(\sigma_0)\cap\dots\cap V(\sigma_{r})\neq \emptyset$ for each point
$[\sigma_0,\dots, \sigma_{r}]\in H^0(\mathscr O_{\mathbb P^n}^{\oplus(r+1)}(1))\setminus\{0\}$.
Moreover,
$$
g^{-1}([\sigma_0,\dots, \sigma_{r}])=Y\times \{[\sigma_0,\dots, \sigma_{r}]\}\subset f^{-1}(Y)
$$
for each point $[\sigma_0,\dots, \sigma_{r}]\in H^0(\mathscr O_{\mathbb P^n}^{\oplus(r+1)}(1))$
such that $\sigma_0,\dots, \sigma_{r} $ generate the same vector space as $h_1,\dots, h_{r} $.\medskip

As in the first proof, the theorem will be proved once we show that $g^{-1}(L)\subseteq f^{-1}(Y)$ for some connected closed subset $L\subset P$, with $\dim(L)\ge 1$. We show that we can take for $L$
a suitable line.
Let us consider the points of $P$ defined by:
$$
q_1:=[h_1,\dots,h_r,0]\textrm{ and }
q_2:=[0,h_1,\dots,h_r],
$$
and let us denote by $L\subset P$ the line joining the two points.
Since any point of $L$ is of type
$\lambda q_1+\mu q_2=[\lambda s_1,\lambda s_2+\mu s_1,\dots,\lambda s_r+\mu s_{r-1},\mu s_r]$,
we easily see that:
\[g^{-1}(L)=Y\times L\subseteq f^{-1}(Y)\subset Z.\]
This concludes the second proof.

\bibliographystyle{amsalpha}
\bibliography{FaltingsBib}

%
%

\end{document}